\newtheorem{theorem}{Theorem}
\newtheorem{definition}{Definition}
\newtheorem{corollary}{Corollary}
\newtheorem{example}{Example}
\newtheorem{lemma}{Lemma}
\newtheorem{proposition}{Proposition}
\newcommand{\comment}[1]{}
\numberwithin{equation}{section}
\begin{document}

\title{Banff designs: difference methods for coloring incidence graphs}{}

\author{Marco Buratti\thanks{SBAI, Sapienza University of Rome, Rome, Italy, {\tt marco.buratti@uniroma1.it}}, 
Francesca Merola\thanks{Dipartimento di Matematica e Fisica, Universit\`a Roma Tre, Rome, Italy
{\tt merola@mat.uniroma3.it }}, 
Anamari Naki\'c\thanks{Faculty of Electrical Engineering and Computing, University of Zagreb, Croatia, {\tt anamari.nakic@fer.hr}}, \\
Christian Rubio-Montiel \thanks{Divisi{\' o}n de Matem{\' a}ticas e Ingenier{\' i}a, FES Acatl{\' a}n, Universidad Nacional Aut{\'o}noma de M{\' e}xico, Naucalpan, Mexico, {\tt christian.rubio@acatlan.unam.mx}}}


\maketitle

\begin{abstract}
We present some results on the harmonious colorings of the Levi graph of a  $2$-design, focusing on Steiner $2$-designs. It is easily seen that the harmonious chromatic number of such a Levi graph is at least the number of points of the design:  we study and construct \emph{Banff designs}, that is, designs such that this lower bound is attained. 
\end{abstract} 

\section{Introduction}


A \emph{harmonious coloring} of a finite graph $G$ is a proper $k$-coloring of its vertices such that every pair of colors appears on at most one edge.  The \emph{harmonious chromatic number} $h(G)$ of $G$ is the minimum number $k$ such that $G$ has a harmonious $k$-coloring. 

This parameter was introduced by Mitchem \cite{MR989131}, a slight variation of the original definition given independently by Hopcroft and Krishnamoorthy \cite{MR711339} and by Frank, Harary, and Plantholt \cite{MR683991}.

From the definition, a graph $G$ of order $n$ has size $m \leq \binom{h(G)}{2}$, and this gives the lower bound
\begin{equation}\label{eq1}
\sqrt{2m+\frac{1}{4}}+\frac{1}{2} \leq h(G). 
\end{equation}

If $G$ has order $n$ and diameter at most two, then $h(G)=n$, see \cite{MR1477743}. In the survey \cite{MR1477743}, we can find a list of interesting results about the harmonious chromatic number and the achromatic number, a related parameter. 

In this note, inspired by the recent preprint \cite{Christian}, we present some results concerning the harmonious chromatic number of the Levi graph of a $(v,k,\lambda)$-design. It is readily seen that this chromatic number is at least $v$; we will be interested in studying designs for which this number is exactly $v$.

We call designs whose Levi graph can be colored with $v$ colors {\it Banff Designs} for brevity. Indeed, the work leading to this note started at the BIRS workshop ``Extremal Graphs arising from Designs and Configurations'', held in Banff in May 2023. Results on the harmonious chromatic number of the Levi graph of the complete graph \cite{GMetal} were also part of the output of the workshop.

\section{The harmonious chromatic number of Levi graphs of incidence structures}

A linear space $\mathcal{S}$ is an incidence point-line structure $(V,\mathscr L)$ such that
any two distinct points are on exactly one line, any line has at least two points, and $\mathscr L$ has at least two lines.

The incidence graph or {\it Levi graph} $G$ of a linear space $\mathcal{S}=(V,\mathscr L)$ is the bipartite graph with $|V|+|\mathscr L|$ vertices corresponding 
to the points and the lines of $\mathcal{S}$, where two vertices are adjacent if the corresponding point-line pair is incident.

On one hand, in any harmonious coloring of $G$, any two different vertices in $V$ receive different colors, because the corresponding two points share a line, so that  $h(G)\geq |V|=v$.

On the other hand, a \emph{line coloring} of $\mathcal{S}$ with $k$ colors is an assignment of the lines of $\mathcal{S}$ to a set of $k$ colors. A line coloring of $\mathcal{S}$ is called \emph{proper} if any two intersecting lines have different colors. The \emph{chromatic index} $\chi'(\mathcal{S})$ of $\mathcal{S}$ is the smallest $k$ such that there exists a proper line coloring of $\mathcal{S}$ with $k$ colors; then we have $v\leq h(G) \leq v+\chi'(\mathcal{S})$. Erd{\H o}s, Faber and Lov{\' a}sz conjectured that the chromatic index of any finite linear space $\mathcal{S}$ cannot exceed the number of its points (see \cite{AKRV,Erd1,Erd2}), i.e., $\chi'(\mathcal{S})\leq v.$ Therefore, if the conjecture holds, we have $v\leq h(G) \leq 2v$, see \cite{Christian}.



As is well known, a 2-$(v,k,\lambda)$-{\it design}, or simply $(v,k,\lambda)$-{\it design} $\cal D$, is a pair $(V,{\cal B})$, where $V$ is a $v$-set of points and 
$\cal B$ is a set of $k$-subsets of $V$ called blocks, $v\ge k \ge 3$, having the property that there are precisely $\lambda$ blocks containing each pair of points of $V$ 
(see for instance \cite{BJL, Stin}). It is easily seen that each point lies on a constant number of blocks denoted by $r$ (the {\it replication number}), while the number of blocks is denoted by $b$. These parameters are connected by the two identities $vr=bk$ and $r(k-1)= \lambda(v-1)$.  
A $(v,k,\lambda)$ design with $\lambda=1$ is also called a {\it Steiner $2$-design}.

We may consider the Levi graph of any incidence structure and in this paper we focus on the Levi graph $\cal L$ of a design $\cal D$. As above, we have that the harmonious chromatic number is bounded below by the number of points, $h({\cal L})\ge v$. We will be concerned with studying and constructing designs, especially Steiner 2-designs, for which this lower bound is attained.



\section{Banff designs}
As just noted, if ${\cal L}$ is the Levi graph of a $(v,k,\lambda)$ design $\cal D$, then $h({\cal L})\ge v$. We want to study designs for which equality holds.
\begin{definition}
A $(v,k,\lambda)$ design $\cal D$ whose Levi graph ${\cal L}$ has harmonious chromatic number equal to $v$ will be called a {\it Banff design}.
\end{definition}

Note that a $(v,3,1)$-design is a Steiner triple system of order $v$.
It is possible to show that 
the existence of a $(v,3,1)$ Banff design is equivalent to that of a {\it nesting} of a STS$(v)$ into a $(v,4,2)$ design. 
Recall that a $(v,3,1)$-design $(V,{\cal B})$ can be {\it nested} if there is a mapping $ \varsigma: {\cal B} \to V$ such that $(V, \{B \cup \varsigma(B)\}:
B \in {\cal B})$ is a $(v, 4, 2)$-design. A nested STS$(v)$ can be harmoniously colored with $v$ colors, by coloring each element of $V$ with a different color, and coloring each block of $B\in \cal B$ with the color given to the point $\varsigma(B)$. Conversely, any harmonious coloring of an STS$(v)$ using exactly $v$ colors gives rise to a nesting by  adding to the block $B$ the only point that has the same color of $B$.
The existence problem for nested Steiner triple systems  has been completely settled in \cite{LR,S}, and using the results in these papers we can state the following. 
\begin{theorem}[\cite{LR,S}] \label{nesting} 
There exists a $(v,3,1)$ Banff design if and only if $v\equiv1$ $($mod $6)$.
\end{theorem}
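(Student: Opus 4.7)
The plan is to exploit the equivalence, established in the paragraph immediately preceding the theorem, between a $(v,3,1)$ Banff design and a nesting of an STS$(v)$ into a $(v,4,2)$ design. Once that reduction is in place, the statement becomes a direct translation of the known existence spectrum for nested Steiner triple systems.

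For necessity, I would run a short arithmetic check. An STS$(v)$ exists only for $v\equiv 1,3\pmod 6$. If a nesting $\varsigma : {\cal B}\to V$ produces a $(v,4,2)$ design, its replication number is $r' = 2(v-1)/3$. For each point $p$, the $r'$ blocks of the $4$-design through $p$ partition into the $(v-1)/2$ original STS-triples through $p$ together with the blocks $B$ satisfying $\varsigma(B)=p$; the latter therefore number $r'-(v-1)/2 = (v-1)/6$, and integrality forces $v\equiv 1\pmod 6$.

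For sufficiency, I would simply invoke the constructions of Lindner--Rodger \cite{LR} and Stinson \cite{S}, which together supply a nested STS$(v)$ for every $v\equiv 1\pmod 6$. The equivalence then yields a $(v,3,1)$ Banff design in each of these cases, completing the characterization.

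The genuine difficulty lies entirely on the sufficiency side, since building nested Steiner triple systems across the whole admissible range requires the full combinatorial machinery developed in \cite{LR,S}. Because the theorem is explicitly framed as an application of those papers, the proof itself proceeds by nothing more than the stated equivalence, the elementary arithmetic for necessity, and the corresponding citation for sufficiency.
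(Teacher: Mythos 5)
Your proposal is correct and matches the paper's approach: the paper likewise reduces the statement to the nesting equivalence established in the preceding paragraph and then cites \cite{LR,S}, offering no further argument. Your explicit counting for necessity (that each point must be the image of exactly $(v-1)/6$ blocks under $\varsigma$) is a small, correct elaboration that the paper delegates to the cited references.
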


In a recent work \cite{BKS}, using a variety of direct  and recursive constructions, the authors prove the existence of a $(v,4,1)$ Banff design for {\em all} admissible values of $v$. 

\smallskip 

In what follows we will use difference methods to construct some classes of Banff $(v,k,\lambda)$ designs. We first introduce the notion of a {\em Banff difference family}, a difference family having extra properties that guarantee that the design arising from it is a Banff design, and then apply this notion in various constructions in the rest of the paper.

\begin{definition}
       Let $G$ be an additive (not necessarily commutative) group of order $v$. A $(v,k,\lambda)$ difference family in $G$ is a set
       ${\cal F}$ of $k$-subsets of $G$ (called {\it base blocks} of $\cal F$) such that the list
 $\Delta {\cal F} := \{x - y : x,y \in B, x \ne y , B\in{\cal F}\}$ contains every element of $G \setminus \{0\}$ exactly $\lambda$ times. The difference family is called {\em cyclic} if $G$ is cyclic.
\end{definition}

Note that if $\cal F$ has size $n$, then $\Delta {\cal F}$ has size $k(k-1)n$. Hence a trivial necessary condition for
the existence of a $(v,k,\lambda)$ difference family is that $\lambda(v-1)=k(k-1)n$.
When the base blocks are pairwise disjoint we speak of a {\it disjoint} difference family. When
$\cal F$  consists of a single base block $B$ we say that $B$ is a {\it difference set}.
The {\it development} of a $(v,k,\lambda)$ difference family ${\cal F}$ is the multiset $dev{\cal F}=\{B+g \ | \ g\in G; B\in {\cal F}\}$ of all
possible translates of its base blocks.
The pair $(G,dev{\cal F})$ is a $(v,k,\lambda)$ design admitting an automorphism group isomorphic to $G$ acting 
sharply transitively on the points.
The existence question for $(v,k,\lambda)$ difference families is in general quite hard, especially when $\lambda=1$.
It was solved for $k=3$ a long time ago by Peltesohn \cite{P}. Recently, and quite unexpectedly, it was also solved for $k=4$ in \cite{Zhang}.
%

If $B$ is a block of a difference family in an additive group $G$, by $-B$ we mean the set $\{-b \, | \, b\in B\}$.  Infinitely many classes of Banff designs can be obtained via {\it Banff difference families}, defined as follows.

\begin{definition}
A  {\it Banff difference family} is a disjoint difference family $ {\cal F} =\{B_1,\dots,B_n\}$
such that $0\notin B_i$ for every $i$, and $B_i \ \cap \ -B_j =\emptyset$ for every possible pair $(i,j)$.
\end{definition}

\begin{example}\label{simple}
A $(13,3,1)$ Banff difference family in $\mathbb{Z}_{13}$ 
is $\{\{7,8,11\},\{4,10,12\}\}$.
\end{example}

We note that
a necessary condition for the existence of a $(v,k,\lambda)$ Banff difference family is that $\lambda$ cannot exceed ${k-1\over2}$.
Indeed, by definition, the union of the $n$ base blocks and their {\it negatives} should be a set (not a multiset) of
size $2kn$ so that $2kn\leq v-1$. On the other hand, as already said, we also have $\lambda(v-1)=k(k-1)n$ and then $\lambda\leq{k-1\over2}$.

We point out that Banff difference families appear in the literature for Steiner triple systems (i.e. $(v,3,1)$ designs) with the name \emph{symmetric} difference families; the strong version of a conjecture of Nov\'ak states that any cyclic STS$(v)$ with $v\equiv 1 \pmod{6}$ is generated by a symmetric cyclic difference family (see \cite{No, FHW}).

Example \ref{simple} above 
falls in the class of radical difference families
which we will consider later. 
We hazard the conjecture that for any cyclic $(v,k,1)$ difference family ${\cal F}=\{B_1,\dots,B_n\}$ there is
a suitable $n$-tuple $(t_1,\dots,t_n)$ of elements of $\mathbb{Z}_v$ such that
$\{B_1+t_1,\dots,B_n+t_n\}$ is a Banff $(v,k,1)$ difference family. 
\begin{example}
Starting from the cyclic $(85,4,1)$ difference family ${\cal F}=\{B_1,\dots,B_7\}$ of the Handbook of Combinatorial Designs \cite{CD}
with base blocks
\small
$$B_1=\{0, 2, 41, 42\}, \quad B_2=\{0, 17, 32, 38\}, \quad B_3=\{0, 18, 27, 37\}, \quad B_4=\{0, 13, 29, 36\},$$
$$B_5=\{0, 11, 31, 35\},\quad B_6=\{0, 12, 26, 34\}, \quad B_7=\{0, 5, 30, 33\}$$
\normalsize
we have obtained, by computer search, a Banff $(85,4,1)$ difference family $\{B_1+t_1,\dots,B_7+t_7\}$ 
where $(t_1,\dots,t_7)=(2,7,10,20,19,60,58)$. Explicitly, the base blocks of this Banff difference family are
$$\{2, 4, 43, 44\}, \quad \{7, 24, 39, 45\}, \quad \{10, 28, 37, 47\}, \quad \{20, 33, 49, 56\},$$
$$\{19, 30, 50, 54\},\quad \{60, 72, 1 , 9\}, \quad \{58, 63, 3, 6\}.$$
\end{example}


\begin{proposition}
Every Banff difference family generates a Banff design.
\end{proposition}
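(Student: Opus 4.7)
The plan is to exhibit an explicit harmonious coloring of the Levi graph $\mathcal L$ of the design $(G,\mathrm{dev}\,\mathcal F)$ using exactly $v$ colors; combined with the lower bound $h(\mathcal L)\ge v$ established earlier, this gives the required equality $h(\mathcal L)=v$. I take the color set to be $G$ itself: assign to each point $p\in V=G$ the color $p$, and to each block $B_i+g$ (with $B_i\in\mathcal F$ and $g\in G$) the color $g$.

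First I verify that this assignment is a proper coloring. The $v$ points trivially receive pairwise distinct colors. An edge of $\mathcal L$ is an incidence $(p,B_i+g)$ with $p\in B_i+g$, and its endpoints carry the colors $p$ and $g$; the equality $p=g$ would force $p-g=0\in B_i$, contradicting the Banff condition $0\notin B_i$.

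Next I verify the harmonious condition. Consider a color pair $\{a,b\}\subseteq G$ with $a\ne b$ (the pair $\{a,a\}$ cannot occur, as just noted). An edge $(p,B_i+g)$ bears the pair $\{a,b\}$ iff either $(p,g)=(a,b)$, which requires $a-b\in B_i$, or $(p,g)=(b,a)$, which requires $b-a\in B_i$. Since the base blocks are pairwise disjoint, each of $a-b$ and $b-a$ lies in at most one base block, so the pair $\{a,b\}$ is borne by at most two edges. If it were borne by two, then $a-b\in B_i$ and $b-a\in B_j$ for some indices $i,j$, giving $a-b\in B_i\cap(-B_j)$, in violation of the third Banff condition $B_i\cap(-B_j)=\emptyset$.

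I do not foresee a serious obstacle: the three axioms of a Banff difference family ($0\notin B_i$, disjointness of base blocks, and $B_i\cap(-B_j)=\emptyset$) are tailored to eliminate, in turn, the three ways a harmonious coloring could fail on the bipartite graph $\mathcal L$: non-properness, two same-colored blocks sharing a point, and a cross-pair of edges $(a,B),(b,B')$ with $c(B)=b$ and $c(B')=a$. The only delicate matter is a careful bookkeeping of color pairs in the Levi graph, which the case analysis above handles cleanly.
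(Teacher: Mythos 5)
Your proof is correct and follows essentially the same route as the paper: the identical coloring $c(p)=p$, $c(B_i+g)=g$, with $0\notin B_i$ giving properness, disjointness of the base blocks ruling out two edges realizing a color pair in the same orientation, and $B_i\cap(-B_j)=\emptyset$ ruling out the crossed orientation. The only difference is presentational (you count edges per color pair, the paper compares two edges with equal color pairs), so no further comment is needed.
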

\begin{proof}
Let ${\cal F}=\{B_1,\dots,B_n\}$ be a Banff $(v,k,\lambda)$ difference family in $G$ and let ${{\cal D}}=(G,dev{{\cal F}})$ be
the $(v,k,\lambda)$ design generated by ${\cal F}$. Consider the map $c: G \ \cup \ dev{{\cal F}} \rightarrow G$
defined by the rule
$$c(g)=g \quad\forall g\in G;\quad\quad c(B_i+t)=t\quad \mbox{for $1\leq i\leq n$ and for any $t\in G$}.$$
Let $\{g,X\}$ be an edge of the Levi graph ${\cal L}$ associated with ${\cal D}$.
We have $g\in X=B_i+t$ for a suitable pair $(i,t) \in \{1\dots,n\}\times G$ and then $g\neq t$, for otherwise we would have $0\in B_i$
contradicting one of the properties of a Banff difference family. This means that $c(g)\neq c(X)$ so that
$c$ is a proper vertex coloring of ${\cal L}$ using $v$ colors.
To prove the assertion we have to show that $c$ is harmonious.

Let $\{g,X\}$ and $\{h,Y\}$ be two edges of ${\cal L}$ receiving the same pair of colors so that we have either
\begin{equation}\label{case1}
c(g)=c(h)\quad\mbox{and}\quad c(X)=c(Y)
\end{equation}
or
\begin{equation}\label{case2}
c(g)=c(Y)\quad\mbox{and}\quad c(X)=c(h).
\end{equation}
We have $g\in X=B_i+t$ and $h\in Y=B_j+u$ for suitable
pairs $(i,t)$ and $(j,u)\in \{1,\dots,n\}\times G$. 
In case (\ref{case1}) we have $g=h$ and $t=u$ so that we have $g\in (B_i+t) \ \cap \ (B_j+t)$.
It follows that $g=b+t$ and $g=b'+t$ for suitable elements $b\in B_i$ and $b'\in B_j$.
This implies $b=b'$ which is possible only for $i=j$ since ${\cal F}$ is disjoint.
We conclude that $X=Y$ and then the two edges $\{g,X\}$ and $\{h,Y\}$ are the same.

In case (\ref{case2}) we have $g=u$ and $t=h$. It follows that $g=b+h$ and $h=b'+g$ 
for suitable elements $b\in B_i$ and $b'\in B_j$. This implies $b=-b'$, hence $B_i \ \cap \ -B_j$ is not empty 
contradicting the hypothesis that ${\cal F}$ is a Banff difference family.

We conclude that distinct edges of ${\cal L}$ always receive a distinct pair of colors, i.e., $c$ is harmonious.
\end{proof}

If $q$ is a prime power, a $(q,k,1)$ difference family in ${\cal F}_q$ is 
said to be {\it radical} \cite{radical,pairwise} if its base blocks are either cosets of the group of $k$-th roots of unity of ${\cal F}_q$ in the case $k$ odd,
or the union of a coset of the $(k-1)$-th roots of unity and zero in the case $k$ even.
\begin{proposition}
A radical $(q,k,1)$ difference family with $k$ odd is also a Banff difference family.
\end{proposition}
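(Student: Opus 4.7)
The plan is to verify the three defining properties of a Banff difference family for the base blocks $B_i = \alpha_i C_k$ of a radical $(q,k,1)$ difference family with $k$ odd, where $C_k$ denotes the group of $k$-th roots of unity of ${\cal F}_q$. The first two conditions are immediate: each $B_i$ is a coset of $C_k$ inside ${\cal F}_q^*$, so $0 \notin B_i$; and distinct base blocks correspond to distinct cosets of $C_k$, which are pairwise disjoint. The real work is to prove the third condition, $B_i \cap (-B_j) = \emptyset$ for every pair $(i,j)$.

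Since $-B_j = (-\alpha_j) C_k$ is again a coset of $C_k$, the intersection $B_i \cap (-B_j)$ is either empty or forces the equality $B_i = -B_j$. I would assume for contradiction that $B_i = -B_j$ for some $(i,j)$ and exploit the key symmetry $\Delta(-B) = -\Delta B = \Delta B$, valid for any subset $B$ since differences come in negated pairs. This gives $\Delta B_i = \Delta B_j$. But the difference lists $\Delta B_1, \dots, \Delta B_n$ of a $(q,k,1)$ difference family are pairwise disjoint, each nonzero element of ${\cal F}_q$ appearing exactly once among them; therefore $i = j$, and then $B_i = -B_i$ forces $-1 \in C_k$.

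To close the argument, I would note that $q$ must be odd: the necessary condition $k(k-1) \mid q-1$ combined with $k-1$ even (since $k$ is odd) forces $q-1$ to be even. Consequently $-1$ has order exactly $2$ in ${\cal F}_q^*$, and Lagrange applied to the odd-order subgroup $C_k$ yields $-1 \notin C_k$, the desired contradiction. The main conceptual step is recognizing that the symmetry $\Delta(-B) = \Delta B$ ties the Banff negation condition directly to the well-known disjointness of the difference lists in a $(q,k,1)$ difference family; the remaining arguments are parity and Lagrange.
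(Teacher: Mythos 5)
Your proof is correct and follows essentially the same route as the paper's: use the coset structure to reduce a nonempty intersection $B_i\cap(-B_j)$ to the equality $B_i=-B_j$, then invoke $\Delta(-B)=\Delta B$ together with the disjointness of the difference lists to force $i=j$. Your argument is in fact slightly more complete, since you explicitly rule out the residual case $B_i=-B_i$ via $-1\notin C_k$ (Lagrange on the odd-order group of $k$-th roots of unity), a step the paper leaves implicit.
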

\begin{proof}
Let ${\cal F}$ be a radical $(q,k,1)$ difference family with $k$ odd.
It is obvious that $\cal F$ is disjoint and that 0 does not lie in any base block.
Assume that $B$ and $B'$ are base blocks such that $B \ \cap \ -B'$ is not empty. 
Then we would have an element $b\in B$ such that $-b\in B'$.
This implies that $B'=-B$ hence $\Delta B'=\Delta B$, and then $B=B'$ else $\cal F$ would have repeated differences.
\end{proof}

Using the necessary and sufficient conditions for the existence of a radical $(q,5,1)$ difference family
\cite{precious} we can state the following.

\begin{corollary}
There exists a Banff $(q,5,1)$ Banff design for any prime power $q=20n+1$ such that
${1+\sqrt{5}\over2}$ is not a $2^{e+1}$-th power of ${\cal F}_q$ where $2^e$ is the largest power of $2$ in $n$. 
\end{corollary}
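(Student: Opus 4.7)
The plan is to obtain this corollary as a direct chain of three results already established in the paper, so that essentially no new argument is needed beyond invoking them in the correct order.

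First, I would invoke the necessary and sufficient conditions from \cite{precious} for the existence of a radical $(q,5,1)$ difference family: such a family exists in $\mathbb{F}_q$ precisely when $q$ is a prime power of the form $20n+1$ and $(1+\sqrt{5})/2$ is not a $2^{e+1}$-th power of $\mathbb{F}_q$, where $2^e$ is the largest power of $2$ dividing $n$. These are exactly the hypotheses in the statement, so under those hypotheses we are guaranteed a radical $(q,5,1)$ difference family $\mathcal F$ in $\mathbb{F}_q$.

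Next, since $k=5$ is odd, the preceding proposition applies verbatim: a radical $(q,k,1)$ difference family with $k$ odd is automatically a Banff difference family. Hence $\mathcal F$ is in fact a Banff $(q,5,1)$ difference family.

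Finally, I would apply the main proposition of the section, which states that every Banff difference family generates a Banff design. Applied to $\mathcal F$, this produces a Banff $(q,5,1)$ design, which is exactly what the corollary claims. There is no genuine obstacle here, as the three ingredients have already been established; the only thing to check is the purely formal compatibility of the parameters ($k=5$ odd, $q = 20n+1$ so that the parameter equation $\lambda(v-1)=k(k-1)n$ matches with $\lambda = 1$), and that the numerical power-residue condition is quoted verbatim from \cite{precious}.
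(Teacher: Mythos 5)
Your proposal is correct and follows exactly the chain the paper intends: the corollary is a direct consequence of the existence criterion for radical $(q,5,1)$ difference families from the cited reference, combined with the proposition that radical difference families with $k$ odd are Banff difference families and the proposition that every Banff difference family generates a Banff design. This matches the paper's (implicit) argument precisely.
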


\section{Exact colorings of the Levi graph}

A harmonious coloring in which each pair of colors appears in {\it exactly} one edge is called an {\it exact coloring} \cite{MR1477743}. 
We noted above that a Banff difference family may exist only if  $\lambda\leq{k-1\over2}$. Let us point out that the extremal case $\lambda={k-1\over2}$ 
deserves special attention since it gives rise to an exact coloring: indeed the Levi graph in this case has exactly ${v\choose2}$ edges. 

A large class of these extremal Banff difference families can be obtained from the main theorem in \cite{disjointBuratti}.
\begin{theorem}
Let $G$ be an abelian group of odd order $v$ and let $A$ be a group of odd order $k$ of fixed-point-free automorphisms of $G$.
Then there exists a Banff $(v,k,{k-1\over2})$ difference family in $G$.
\end{theorem}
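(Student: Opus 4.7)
The plan is to build the family from the action of $A$ on $G\setminus\{0\}$. Fixed-point-freeness forces every orbit on $G\setminus\{0\}$ to have the full size $k$, giving $(v-1)/k$ orbits in total. Since $G$ is abelian, the inversion map $\iota\colon g\mapsto -g$ is an automorphism of $G$ that commutes with every element of $A$; hence $\iota$ permutes the $A$-orbits. Moreover $\iota$ fixes no orbit: a fixed orbit would be a set of odd size $k$ on which the involution $\iota$ acts, yielding a $\iota$-fixed element, yet the only $\iota$-fixed element in $G$ is $0$ (as $|G|$ is odd). Thus the orbits pair up into $(v-1)/(2k)$ couples $\{\mathcal O,-\mathcal O\}$, and I form $\mathcal F=\{B_1,\dots,B_n\}$ by selecting one orbit per couple.

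Once this setup is fixed, checking the three Banff conditions is almost free. The base blocks are disjoint because distinct $A$-orbits are; no $B_i$ contains $0$ because orbits lie in $G\setminus\{0\}$; and if $B_i\cap(-B_j)$ were non-empty then the $A$-orbits $B_i$ and $-B_j$ would coincide, so both members of $\{B_j,-B_j\}$ would belong to $\mathcal F$, contradicting the selection rule.

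The substantive step is showing that $\mathcal F$ is a $(v,k,(k-1)/2)$ difference family, which is essentially the main theorem of \cite{disjointBuratti}. The plan for this count is as follows: (i) for any subset $B$ of $G$ one has $\Delta B=-\Delta B$ as multisets (swap each ordered pair), so $\Delta(-B)=\Delta B$, and each $\iota$-couple therefore contributes $2\,\Delta B$ to the aggregate $\Delta\mathcal O_1\cup\cdots\cup\Delta\mathcal O_{(v-1)/k}$; (ii) the aggregate covers each $d\in G\setminus\{0\}$ exactly $k-1$ times, since for every non-identity $a\in A$ the endomorphism $a-\mathrm{id}$ of $G$ is an automorphism (fixed-point-freeness again), so $d=a(x)-x$ has a unique solution $x$. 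Combining these, $\mathcal F$ covers each $d$ exactly $(k-1)/2$ times. This last count is the real obstacle; all the remaining work---disjointness, $0\notin B_i$, and $B_i\cap(-B_j)=\emptyset$---is light bookkeeping built on the one-orbit-per-couple selection and the parity hypotheses.
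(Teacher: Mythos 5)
Your proof is correct and follows essentially the same construction as the paper: pair the $A$-orbits on $G\setminus\{0\}$ into couples $\{\mathcal O,-\mathcal O\}$ and take one orbit per couple as the base blocks. The only difference is that the paper simply defers to the proof of Theorem 3.1 of the cited reference for this orbit-pairing and the $\lambda=(k-1)/2$ count, whereas you supply the (correct) self-contained verification, including the key facts that negation commutes with $A$ and fixes no orbit, and that $a-\mathrm{id}$ is bijective for $a\neq\mathrm{id}$.
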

\begin{proof}
Looking at the proof of Theorem 3.1 in \cite{disjointBuratti} one can see that there is a suitable 
${v-1\over2k}$-subset $Y$ of $G\setminus\{0\}$ such that $Y \ \cup \ -Y$
is a complete system of representatives for the $A$-orbits on $G\setminus\{0\}$ (note that there is a typo on the size of $Y$ in \cite{disjointBuratti}).
Then we can see that the set of all $A$-orbits represented by the elements of $Y$ is a Banff $(v,k,{k-1\over2})$ difference family in $G$.
\end{proof}

In view of the necessary and sufficient condition for the existence of at least one pair $(G,A)$ as in the statement 
of the above theorem (see Proposition 3.4 in \cite{disjointBuratti}), we can state the following.
\begin{corollary}
If $k\geq3$ is odd and the maximal prime power factors of $v$ are all congruent to $1$ $($mod $2k)$,
then there exists an exact coloring of the Levi graph of a $(v,k,{k-1\over2})$-design.
\end{corollary}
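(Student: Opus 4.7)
The plan is to combine the preceding theorem with the known existence of a suitable pair $(G,A)$ and then observe that the extremal value $\lambda=(k-1)/2$ automatically upgrades a harmonious coloring to an exact one.

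First I would invoke Proposition 3.4 of \cite{disjointBuratti}, which is cited immediately before the statement: the hypothesis that every maximal prime-power factor $q_i$ of $v$ satisfies $q_i \equiv 1 \pmod{2k}$ is exactly the condition guaranteeing the existence of an abelian group $G$ of order $v$ together with a fixed-point-free group $A$ of automorphisms of $G$ of odd order $k$. Explicitly, one takes $G=\bigoplus_i (\mathbb{F}_{q_i},+)$, picks a primitive $k$-th root of unity $\zeta_i \in \mathbb{F}_{q_i}^*$ (which exists because $k\mid q_i-1$, forced by $q_i \equiv 1 \pmod{2k}$), and lets $A$ be the cyclic group generated by the diagonal multiplication $(x_i)\mapsto(\zeta_i x_i)$; this action is fixed-point-free on $G\setminus\{0\}$ since every nontrivial power of $\zeta_i$ acts without fixed points on $\mathbb{F}_{q_i}$.

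Next I would apply the preceding theorem to the pair $(G,A)$ just constructed, obtaining a Banff $(v,k,(k-1)/2)$ difference family in $G$. By the earlier proposition that every Banff difference family generates a Banff design, this yields a Banff $(v,k,(k-1)/2)$-design $\mathcal{D}$ whose Levi graph $\mathcal{L}$ admits a harmonious $v$-coloring.

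Finally I would verify exactness by a counting argument. With $\lambda=(k-1)/2$, the identities $\lambda(v-1)=r(k-1)$ and $vr=bk$ give $b=v(v-1)/(2k)$, so $\mathcal{L}$ has exactly $bk=\binom{v}{2}$ edges. A harmonious $v$-coloring assigns to each edge an unordered pair of distinct colors, with distinct edges receiving distinct pairs; since the number of edges equals the number $\binom{v}{2}$ of available pairs, every pair is realized by exactly one edge, and the coloring is exact. The only non-routine ingredient is Proposition 3.4 of \cite{disjointBuratti}, invoked as a black box; the remainder is arithmetic bookkeeping.
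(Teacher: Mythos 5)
Your proposal is correct and follows essentially the same route as the paper: existence of the pair $(G,A)$ via Proposition 3.4 of the cited work, the preceding theorem to get a Banff $(v,k,\frac{k-1}{2})$ difference family and hence a Banff design, and the count $b k=\binom{v}{2}$ to upgrade the harmonious $v$-coloring to an exact one (the paper makes this last observation just before the theorem). Your explicit construction of $(G,A)$ by diagonal multiplication by primitive $k$-th roots of unity is a correct, slightly more self-contained elaboration of the black-box citation, but not a different argument.
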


\section{Banff Steiner 2-designs}\label{steiner}

In this section we will consider only $(v,k,1)$ designs, that is, Steiner 2-designs. We start by considering 
projective planes.
 
The finite projective plane $PG(2,q)$, $q$ a prime power, can be seen as a $(q^2+q+1,q+1,1)$-design, where the blocks correspond to the lines of the plane; it is well known that this design is generated by a cyclic $(q^2+q+1,q+1,1)$-difference set (a Singer difference set, see for instance \cite{Stin}, Section 3.3). 

Note that the Levi graph of $PG(2,q)$ is a $(q+1,6)$-cage \cite{EJ}.
A preliminary investigation about the harmonious chromatic number of this graph 
was done in \cite{Christian} where it was established that this number is at most $q^2+q+2$.
Here we prove that it is always equal to $q^2+q+1$, that is, PG$(2,q)$ is a Banff design.

\begin{theorem}
Any desarguesian projective plane is a Banff design.
\end{theorem}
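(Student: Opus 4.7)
The plan is to exhibit a Banff $(q^2+q+1,\,q+1,\,1)$ difference family (consisting of a single block) whose development is $PG(2,q)$, and then invoke the earlier proposition that every Banff difference family generates a Banff design. The base block I have in mind is a suitable translate of a Singer difference set.

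Recall that $PG(2,q)$ is the development in $\mathbb{Z}_{q^2+q+1}$ of a Singer cyclic difference set $B$ of size $q+1$; any translate $B+t$ develops the same design. For $\{B+t\}$ to be a Banff difference family one needs $0\notin B+t$ and $(B+t)\cap-(B+t)=\emptyset$. Unpacking the intersection, these conditions together forbid the existence of $b,b'\in B$ with $b+b'=-2t$. Since $q^2+q+1$ is always odd, $2$ is invertible in $\mathbb{Z}_{q^2+q+1}$, and the condition $0\in B+t$ is the diagonal case $b=b'=-t$ of the intersection condition. So both Banff requirements collapse to the single sumset-avoidance condition
\[
-2t\notin B+B, \qquad \text{where } B+B:=\{b+b':b,b'\in B\}.
\]

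I would then bound the number of bad translates. The sumset satisfies $|B+B|\leq\binom{q+2}{2}=\tfrac{(q+1)(q+2)}{2}$ (unordered pairs with repetition from a $(q+1)$-set), and since $t\mapsto -2t$ is a bijection of $\mathbb{Z}_{q^2+q+1}$, the number of bad $t$'s is at most $\tfrac{(q+1)(q+2)}{2}$. The inequality $\tfrac{(q+1)(q+2)}{2}<q^2+q+1$ reduces to $q(q-1)>0$, which holds for every prime power $q\geq 2$. Hence a good translate always exists, producing a Banff $(q^2+q+1,q+1,1)$ difference family whose development is $PG(2,q)$.

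The step requiring a small piece of insight is the observation that the two Banff conditions collapse to a single sumset-avoidance condition, which is what makes the counting clean. Beyond that I foresee no genuine obstacle: the counting bound has comfortable slack for all admissible $q$, the only arithmetic input is that $q^2+q+1$ is odd, and no case analysis on small $q$ is required.
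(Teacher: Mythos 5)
Your proof is correct and is essentially the paper's own argument: the paper also translates a Singer difference set, avoiding the set $X=\{(b_i+b_j)/2\}$ of size at most $(q+1)(q+2)/2<q^2+q+1$, which is just your sumset-avoidance condition $-2t\notin B+B$ rewritten after dividing by $2$ (legitimate since $q^2+q+1$ is odd). The only cosmetic differences are the sign of the translate ($B-t$ versus $B+t$) and that the paper treats the condition $0\notin B'$ as a separate case rather than as the diagonal of the intersection condition.
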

\begin{proof}
It is enough to prove that there exists a $(q^2+q+1,q+1,1)$ Banff difference set for any prime power $q$.
Let $B=\{b_0,b_1,\dots,b_q\}$ be a Singer $(q^2+q+1,q+1,1)$ difference set.
Consider the subset $X$ of $\mathbb{Z}_{q^2+q+1}$ defined by 
$$X=\left\{{b_i+b_j\over2} \ \Big| \ 0\leq i\leq j\leq q\right\}.$$ 
We have $|X|={(q+1)(q+2)\over2}<q^2+q+1$ so that we can take an element $t\in\mathbb{Z}_{q^2+q+1}\setminus X$.
Consider the set $B':=B-t$ which, of course, is still a $(q^2+q+1,q+1,1)$ difference set.

If $0\in B'$ we would have $t=b_i$ for some $i$. On the other hand, we have $b_i={b_i+b_i\over2}\in X$
contradicting the choice of $t$. 
Thus $B'$ does not contain $0$.

If $B' \ \cap \ -B'$ is not empty there would be a pair $(i,j)$ such that $b_i-t=-(b_j-t)$ and then $t={b_i+b_j\over2}\in X$
contradicting again the choice of $t$.

We conclude that $B'$ is a Banff difference set.
\end{proof}

In the following, given a prime power $q\equiv1$ (mod $e$), we denote by $C^e$ the subgroup of ${\cal F}_q^*$
of index $e$, that is the group of non-zero $e$-th powers of ${\cal F}_q$. The cosets of $C^e$ in ${\cal F}_q^*$ will be denoted 
by $C^e_0$, $C^e_1$, \dots, $C^e_{e-1}$.

Given a prime power $q\equiv1$ (mod $k(k-1))$, a $k$-subset $B=\{b_0,b_1,\dots,b_{k-1}\}$ of ${\cal F}_q$ will be said to be a $(q,k,1)$ Banff set if the
following conditions hold:
\begin{itemize}
\item[(1)] the list of differences $\vec\Delta B:=\{b_i-b_j \ | \ 0\leq i < j \leq k-1\}$ is a complete system of representatives for the cosets
of $C^{k(k-1)/2}$ in ${\cal F}_q^*$;
\item[(2)] $B$ is a partial system of representatives for
for the cosets of $C^{k(k-1)/2}$ in ${\cal F}_q^*$.
\end{itemize}

The following is an adaptation of the {\it lemma on evenly distributed differences} by Wilson \cite{W} (see also \cite{BJL}).

\begin{lemma}\label{Banff set}
If there exists a $(q,k,1)$ Banff set, then there exists a $(q,k,1)$ Banff difference family. 
\end{lemma}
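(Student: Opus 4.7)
The plan is to adapt Wilson's construction by multiplying $B$ by a carefully chosen set of scalars $T\subset\mathcal{F}_q^*$ to obtain the base blocks. Set $e=k(k-1)/2$ and $H=C^e$. The key preliminary observation is that $-1\in H$: from $q\equiv 1\pmod{k(k-1)}$ one has $e\mid (q-1)/2$, so the unique involution of $\mathcal{F}_q^*$ is an $e$-th power. Consequently $|H|=2(q-1)/(k(k-1))$ is even and multiplication by $-1$ acts freely on $H$; I pick a transversal $T\subset H$ for the $\langle -1\rangle$-orbits, producing a disjoint decomposition $H=T\sqcup(-T)$ with $|T|=(q-1)/(k(k-1))$. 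This is exactly the number of base blocks needed in a $(q,k,1)$ difference family. My candidate is
\[
{\cal F}:=\{\alpha B\,:\,\alpha\in T\}.
\]

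The first step is to verify that ${\cal F}$ is a $(q,k,1)$ difference family by a coset-by-coset count. Fix a coset $cH$ of $H$ in $\mathcal{F}_q^*$. By condition~(1) there is a unique $d\in\vec\Delta B$ in $cH$, and since $-1\in H$ the element $-d$ also sits in $cH$. Hence the ordered differences of $B$ that meet $cH$ are exactly $d$ and $-d$. Using $H=T\sqcup(-T)$, every element of $cH$ is realized as $\alpha\cdot\delta$ with $\alpha\in T$ and $\delta\in\{d,-d\}$ in exactly one way; summing over the $e$ cosets gives the difference family property.

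The second step is to verify the three extra conditions in the definition of a Banff difference family, where condition~(2) does most of the work. Since $B$ is a partial transversal of the cosets of $H$, it consists of nonzero elements, so $0\notin\alpha B$ for every $\alpha\in T$. For the two remaining properties the common ingredient is: if $\alpha b=\pm\alpha' b'$ with $\alpha,\alpha'\in T$ and $b,b'\in B$, then $b/b'=\pm\alpha'/\alpha\in H$ (the sign is absorbed because $-1\in H$), so by~(2) we must have $b=b'$. This forces either $\alpha=\alpha'$, giving pairwise disjointness of the base blocks, or $\alpha'=-\alpha$, which is impossible since $T\cap(-T)=\emptyset$; the latter rules out $\alpha B\cap(-\alpha' B)\neq\emptyset$ for every pair, including the diagonal case $\alpha=\alpha'$.

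The step I expect to require the most care is the initial observation that $-1\in C^{k(k-1)/2}$ under the hypothesis $q\equiv 1\pmod{k(k-1)}$: without it, neither the transversal $T$ nor the ``$\pm\alpha'/\alpha\in H$'' bundling is available, and the whole strategy collapses. A natural but slightly less flexible alternative would be to take $T=C^{k(k-1)}$ as a subgroup, which works when $-1\notin C^{k(k-1)}$ but fails otherwise; choosing $T$ as a transversal of $\langle -1\rangle$-orbits in $C^e$ uniformly handles both cases. Once this is in place, the rest of the verification is mechanical bookkeeping guided by the two defining properties of a Banff set.
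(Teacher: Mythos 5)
Your proof is correct and takes essentially the same route as the paper's: the paper multiplies the Banff set $B$ by a complete system of representatives $S$ for the cosets of $\{1,-1\}$ in $C^{k(k-1)/2}$ (your transversal $T$ of the $\langle -1\rangle$-orbits), and uses condition (2) in exactly the way you do to rule out coincidences among the blocks and their negatives. The only cosmetic difference is that you spell out the coset-by-coset counting behind Wilson's lemma on evenly distributed differences, which the paper simply cites.
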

\begin{proof}
Let $B=\{b_0,b_1,\dots,b_{k-1}\}$ be a $(q,k,1)$ Banff set and take $S$ to be a complete system of representatives for the cosets of $\{1,-1\}$ in $C^{k(k-1)/2}$.

For $s\in S$ denote by $B_s$ the set $\{b_0s, b_1s,\dots,b_{k-1}s\}$, and set ${\cal F}=\{B_s \ | \ s\in S\}$. Condition (1) guarantees that ${\cal F}$ is a
$(q,k,1)$ difference family (this is the mentioned lemma by Wilson).
Obviously, no block of ${\cal F}$ contains zero.
Now assume that we have $$B_s \ \cap \ B_t\neq\emptyset\quad {\rm or}\quad B_s \ \cap \ -B_t\neq\emptyset$$
for some elements $s, t\in S$. In this case we have $b_is=\varepsilon b_jt$ for suitable $i, j\in\{0,\dots,k-1\}$ and $\varepsilon\in\{1,-1\}$.
Thus we have $b_ib_j^{-1}=\varepsilon s^{-1}t\in C^{k(k-1)/2}$ so that, by (2), $b_i=b_j$ and then $s^{-1}t=\varepsilon\in\{1,-1\}$.
By definition of $S$, this is possible only for $s=t$ and $\varepsilon=1$. 
We conclude that condition (2) guarantees that ${\cal F}$ is a Banff difference family. 
\end{proof}

We need the following consequence of the theorem of Weil on multiplicative character sums
(see \cite{LN}, Theorem 5.41), that is Theorem 2.2 in \cite{BP}.
\begin{lemma}\label{BP} 
Let $q\equiv 1 \pmod{e}$ be a prime power, let $\{c_1,\dots,c_k\}$ be a $k$-subset  of ${\cal F}_q$, and let $(\phi_1,\dots,\phi_k)$ be an ordered $k$-tuple of $\mathbb{Z}_e^k$.
Then the set $X:=\{x\in {\cal F}_q: x-c_i\in C_{\phi_i}^e \,\,{\rm  for }\,\, i=1,\dots,k \}$
is not empty provided that $q$ is greater than a suitable bound $Q(e,k)$.
\end{lemma}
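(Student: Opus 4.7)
The plan is to express $|X|$ as a sum of multiplicative character sums over ${\cal F}_q$, and then invoke Weil's bound to show that the non-principal terms are negligible compared with the main term once $q$ is large. First I would fix a multiplicative character $\chi$ of ${\cal F}_q^*$ of exact order $e$ and set $\omega=\exp(2\pi i/e)$. Orthogonality of characters gives, for every $y\in {\cal F}_q^*$,
\[
\frac{1}{e}\sum_{t=0}^{e-1}\omega^{-t\phi}\chi(y)^t=\begin{cases}1&\text{if $y\in C^e_\phi$,}\\ 0&\text{otherwise.}\end{cases}
\]
Applying this with $y=x-c_i$ for $i=1,\dots,k$, taking the product, and summing over $x\in {\cal F}_q\setminus\{c_1,\dots,c_k\}$ (so that each factor lies in ${\cal F}_q^*$) produces
\[
|X|=\frac{1}{e^k}\sum_{(t_1,\dots,t_k)\in\mathbb{Z}_e^k}\left(\prod_{i=1}^k\omega^{-t_i\phi_i}\right)\sum_{x\notin\{c_1,\dots,c_k\}}\chi\!\left(\prod_{i=1}^k(x-c_i)^{t_i}\right).
\]

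The principal tuple $(0,\dots,0)$ contributes the main term $(q-k)/e^k$. For every other tuple I would set $f(x)=\prod_i(x-c_i)^{t_i}$ and observe that, since the $c_i$ are pairwise distinct, unique factorization in ${\cal F}_q[x]$ forces every exponent to be a multiple of $e$ for $f$ to have the form $c\cdot g(x)^e$; this is excluded by the choice of tuple, so Weil's theorem (\cite{LN}, Theorem 5.41) applies and gives
\[
\left|\sum_{x\in {\cal F}_q}\chi(f(x))\right|\leq (k-1)\sqrt q.
\]
The restricted sum appearing in the expansion above differs from this full sum by at most $k$ in absolute value, corresponding to the indices $j$ with $t_j=0$ for which $f(c_j)$ can be nonzero (when $t_j\neq 0$ the value $f(c_j)$ already vanishes and contributes nothing under the convention $\chi(0)=0$).

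Combining the $e^k-1$ non-principal contributions via the triangle inequality then yields
\[
|X|\;\geq\;\frac{q-k}{e^k}-\frac{(e^k-1)\bigl((k-1)\sqrt q+k\bigr)}{e^k},
\]
which is strictly positive whenever $q$ exceeds an explicit threshold $Q(e,k)$, namely any value past which the $q/e^k$ main term dominates the $O(\sqrt q)$ correction. The only substantive step is the verification that each $f$ attached to a non-principal tuple fails to be an $e$-th power times a constant, as this is what makes Weil's bound informative; it is precisely here that the hypothesis on the distinctness of the $c_i$ is used. Everything else is bookkeeping: the orthogonality identity for the cosets of $C^e$, the expansion of $|X|$ as a finite combination of character sums, and the asymptotic comparison of the main term with the error.
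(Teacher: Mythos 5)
Your argument is correct and is essentially the standard proof of this lemma, which the paper itself states without proof as a citation of Theorem 2.2 in \cite{BP}: expand the indicator of each coset condition by orthogonality of the characters of ${\cal F}_q^*/C^e$, isolate the main term $(q-k)/e^k$, and bound each non-principal term by Weil's theorem after checking that $\prod_i(x-c_i)^{t_i}$ is not a constant times an $e$-th power (which, as you note, follows from the distinctness of the $c_i$ and $0\le t_i\le e-1$ with some $t_i\neq 0$). The only point to make explicit is the normalization $\chi(g)=\omega$ for the primitive root $g$ used to index the cosets $C^e_\phi$, so that $\chi(y)=\omega^{\phi}$ exactly when $y\in C^e_\phi$; with that, the bookkeeping and the resulting threshold $Q(e,k)$ (of order $k^2e^{2k}$, as the paper remarks) are all in order.
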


The bound $Q(e,k)$ is generally unwieldy; a more manageable approximation by excess is given by $k^2e^{2k}$. 


The following is an adaptation of some of the constructions in \cite{BP}.

\begin{theorem}
There exists a Banff $(q,k,1)$-design for any prime power $q\equiv1$ $($mod $k(k-1))$ sufficiently large.
\end{theorem}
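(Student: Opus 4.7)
The plan is to construct a $(q,k,1)$ Banff set for every prime power $q\equiv 1\pmod{k(k-1)}$ that is sufficiently large; once this is done, Lemma~\ref{Banff set} yields a $(q,k,1)$ Banff difference family, and the proposition that every Banff difference family generates a Banff design produces the claimed $(q,k,1)$ Banff design.

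Set $e = k(k-1)/2$ and write $C^e_0,\dots,C^e_{e-1}$ for the cosets of the subgroup $C^e\subset {\cal F}_q^*$ of nonzero $e$-th powers. I want to produce $b_0,\dots,b_{k-1}\in{\cal F}_q$ satisfying the two conditions required of a Banff set: the differences $b_i-b_j$ with $i<j$ form a complete system of representatives for the cosets of $C^e$ in ${\cal F}_q^*$, and the $b_i$'s themselves lie in pairwise distinct cosets of $C^e$. To this end, first fix a bijection
\[
\sigma \colon \{(i,j) : 0\le i < j \le k-1\} \longrightarrow \{0,1,\dots,e-1\}
\]
and an injection $\tau \colon \{0,1,\dots,k-1\}\to \{0,1,\dots,e-1\}$; the latter exists because $k\le e$ for every $k\ge 3$. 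These data prescribe the target coset of each $b_i$ and of each difference $b_i-b_j$.

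Now I build the $b_j$ inductively. Let $b_0$ be any element of $C^e_{\tau(0)}$. Having chosen $b_0,\dots,b_{j-1}$, pairwise distinct, all nonzero, and with the prescribed coset memberships, apply Lemma~\ref{BP} with the $(j+1)$-set $\{0,b_0,\dots,b_{j-1}\}$ and a target tuple encoding $b_j\in C^e_{\tau(j)}$ together with $b_i-b_j\in C^e_{\sigma(i,j)}$ for each $i<j$ (absorbing the coset of $-1$ into the targets as needed): for $q>Q(e,j+1)$ such a $b_j$ exists. The distinctness of $b_j$ from the previously chosen elements is automatic from the injectivity of $\tau$, since elements lying in distinct $C^e$-cosets cannot coincide. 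Requiring $q>Q(e,k)$ lets every step succeed, and the resulting $B=\{b_0,\dots,b_{k-1}\}$ is a $(q,k,1)$ Banff set by construction.

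The main obstacle is precisely the reliance on Lemma~\ref{BP}: the Weil-based estimate is inherently nonconstructive and only delivers an asymptotic threshold on $q$ (of order $k^2e^{2k}$, per the remark following that lemma), so the resulting statement must be phrased for $q$ sufficiently large rather than for every admissible $q$. Everything else is essentially bookkeeping: selecting the coset assignments $\sigma$ and $\tau$, translating the Banff set conditions into the format of Lemma~\ref{BP}, and checking that the bound $Q(e,k)$ dominates the thresholds arising at each of the $k$ inductive steps.
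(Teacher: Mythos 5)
Your proposal is correct and follows essentially the same route as the paper: fix a bijection from the index pairs to $\mathbb{Z}_e$, build $b_0,\dots,b_{k-1}$ one at a time using Lemma~\ref{BP} to hit the prescribed cosets for both the elements and their differences, and then invoke Lemma~\ref{Banff set}. The only cosmetic difference is that the paper takes the specific injection $b_j\in C^e_j$ where you allow an arbitrary injection $\tau$; the argument is otherwise identical.
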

\begin{proof}
Set $e={k(k-1)\over2}$, let $q=en+1$ be a prime power greater than $Q(e,k)$, and let
$I$ be the set of pairs $(i,j)$ with $0\leq i<j\leq k-1$.
Fix any bijection $\phi: I\longrightarrow \mathbb{Z}_e$ and construct a set $B=\{b_0,\dots,b_{k-1}\}\subset{\cal F}_q^*$ as follows.
Start taking $b_0$ arbitrarily in $C^e_0$ and then take the other elements 
$b_1$, \dots, $b_{k-1}$ iteratively, one by one, according to the rule that
once that $b_{j-1}$ has been chosen, we pick $b_j$ arbitrarily in the set 
$$X_j=\{x\in {\cal F}_q: x-0\in C^e_j \ {\rm and
} \ x-b_i\in C^e_{\phi(i,j)} \quad {\rm for} \ 0\leq i\leq j-1\}$$
which is not empty  by Lemma \ref{BP}.

It is quite evident that this set $B$ is a $(q,k,1)$ Banff set. The assertion then follows from Lemma \ref{Banff set}.
\end{proof}

Applying the above theorem with $k=4$ together with Lemma \ref{BP} we can say that there exists a
$(q,4,1)$ Banff set for any prime power $q=12n+1>Q(6,4)=9\,152\,353$. 
On the other hand, we have checked by computer that a Banff set exists in ${\cal F}_q$ for any prime power value of $q\equiv1$ (mod 12) up to that bound;
in particular, the Banff sets we constructed are, 
in all except for four small prime fields,
of the form $\{1,x,x^2,x^3\}$, $x \in {\cal F}_q$. We report in Table \ref{ta:banffset1} and \ref{ta:banffset2} the Banff sets for some small values of $q$.

\begin{center}
\begin{table}[ht]
\centering
\begin{tabular}{||c c || c c|| c c ||} 
 \hline
$q$  & Banff set & $q$ & Banff set & $q$ & Banff set \\ 
 \hline\hline
 13 & $\{1,2,4,10\}$ & 157 & $\{1,6, 6^2,6^3\}$ & 313 & $\{1,10, 10^2,10^3\}$\\ 
 \hline
 37 & $\{1,13, 13^2,13^3\}$ & 181 & $\{1,2,13,19\}$ & 337 & $\{1,65, 65^2,65^3\}$ \\
 \hline
 61 & $\{1,7,11,12\}$ & 193 & $\{1,70, 70^2,70^3\}$ & 349 & $\{1,18, 18^2,18^3\}$\\
 \hline
 73 & $\{1, 29, 29^2, 29^3\}$ & 229 & $\{1,18, 18^2,18^3\}$ & 373 & $\{1, 32, 32^2,32^3\}$ \\
 \hline
 97 & $\{1, 41, 41^2, 41^3\}$ & 241 & $\{1,66, 66^2, 66^3\}$ & 397 & $\{1,13, 13^2,13^3\}$\\  
 \hline
 109 & $\{1,2,11,13\}$ & 277 & $\{1,72, 72^2,72^3\}$ & 409 & $\{1,33, 33^2,33^3\}$\\  
 \hline
\end{tabular}
\caption{Banff sets in ${\cal F}_q$, $q$ prime, $q\le 409$.\label{ta:banffset1}}
\end{table}
\end{center}

\begin{center}
\begin{table}[ht]
\centering
\begin{tabular}{||c c c|| c c c||} 
 \hline
$q$  & $f(z)$ & $x$ & $q$ & $f(z)$ & $x$ \\ 
 \hline\hline
 25 & $z^2+z+2$ & $z^7$ & 361 & $z^2+z+2$ &$z^{11}$\\ 
 \hline
 49 & $z^2+z+3$ & $z^{11}$ & 449 & $z^2+z+7$ & $z^{83}$ \\
 \hline
 121 & $z^2+z+7$ & $z^{17}$ & 841 & $z^2+z+3$ & $z^{71}$\\
 \hline
 169 & $z^2+z+2$ & $z^{23}$ & 961 & $z^2+2z+3$ & $z^{49}$ \\
 \hline
 289 & $z^2+z+3$ & $z^{91}$ &  &  & \\  
 \hline
\end{tabular}
\caption{$x$ such that $\{1,x,x^2,x^3\}$ is a Banff set in ${\cal F}_q\simeq\mathbb{Z}_p[z]/(f(z))$, $q=p^2\le 961$.\label{ta:banffset2}}
\end{table}
\end{center}
\comment{}

\begin{corollary}
There exists a Banff $(q,4,1)$-design for any prime power $q\equiv1$ $($mod $12).$
\end{corollary}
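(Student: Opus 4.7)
The plan is to split the argument into an asymptotic part and a finite verification. Set $Q=Q(6,4)\leq 9\,152\,353$. For every prime power $q\equiv 1\pmod{12}$ with $q>Q$ the theorem just proved yields a Banff $(q,4,1)$ difference family, which in turn generates a Banff design by the proposition on Banff difference families. Hence the content of the corollary reduces to exhibiting, for each of the finitely many prime powers $q\equiv 1\pmod{12}$ with $13\leq q\leq Q$, a Banff $(q,4,1)$ difference family.

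For the remaining small values I would follow the route of Lemma~\ref{Banff set} and produce a Banff $(q,4,1)$ set $B=\{b_0,b_1,b_2,b_3\}\subset{\cal F}_q^{*}$, i.e., a $4$-subset whose six ordered differences represent the six cosets of $C^{6}$ exactly once and whose four elements themselves lie in four distinct cosets of $C^{6}$. Such a $B$ is then automatically upgraded by Lemma~\ref{Banff set} to a Banff $(q,4,1)$ difference family, which by the earlier proposition generates a Banff design. The search can be organised in two stages: first scan the elements $x\in{\cal F}_q^{*}$ and test whether the geometric progression $\{1,x,x^{2},x^{3}\}$ is a Banff set, which amounts to a local check of ten coset labels; only if no such $x$ exists should one fall back on a brute-force enumeration of all $4$-subsets of ${\cal F}_q^{*}$. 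Tables~\ref{ta:banffset1} and~\ref{ta:banffset2} already record explicit witnesses for a range of small $q$, and the four prime fields $q\in\{13,61,109,181\}$ where no geometric progression works are dealt with by the non-geometric Banff sets listed in Table~\ref{ta:banffset1}.

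The main obstacle is the sheer density of cases to verify: there are on the order of $10^{5}$ prime powers $q\equiv 1\pmod{12}$ below $Q$, and for each such $q$ the sextic-coset computations must be carried out either in a prime field or in a quadratic extension ${\cal F}_{q}\simeq\mathbb{Z}_p[z]/(f(z))$. In practice the geometric strategy $\{1,x,x^{2},x^{3}\}$ succeeds for essentially every $q$, so the running time scales only linearly with $q$ and the total sweep is well within reach; the four documented exceptions indicate that the more expensive fallback enumeration is never triggered for moderate or large~$q$. Once the computer search has confirmed a Banff set (or the four ad hoc Banff sets) for every $q\leq Q$, the chain ``Banff set $\Rightarrow$ Banff difference family $\Rightarrow$ Banff design'' via Lemma~\ref{Banff set} and the proposition on Banff difference families assembles all prime powers $q\equiv 1\pmod{12}$ into a single existence statement, completing the proof.
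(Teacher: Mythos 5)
Your proposal is correct and follows essentially the same route as the paper: the theorem with $k=4$ handles all prime powers $q\equiv 1\pmod{12}$ above $Q(6,4)=9\,152\,353$, and a computer search for Banff sets (mostly of the form $\{1,x,x^2,x^3\}$, with the four exceptional prime fields $q=13,61,109,181$ handled ad hoc) covers the finitely many remaining cases, after which Lemma~\ref{Banff set} and the proposition on Banff difference families yield the designs. This is exactly the argument the authors give.
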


However note that already in the case $k=5$ the situation changes; we can assert that a $(q,5,1)$ Banff set for any prime power $q=20n+1>Q(10,5)=122\,500\,800\,001$, so that checking all the values of $q$ below the bound becomes unmanageable.

\comment{
In the case $k=4$ let us check, for instance, that 
$B=\{1,13, 13^2,13^3\}=\{1, 13, 21, 14\}$
is a $(37,4,1)$ Banff set.
The cosets of $C^6$ in $\mathbb{Z}_{37}^*$ are listed below 
\small
$$C^6_0=\{1,10,11,26,27,36\},\quad C^6_1=\{2, 20, 22, 15,17, 35\},\quad C^6_2=\{4, 3, 7, 30,34, 33\},$$
$$C^6_3=\{8, 6, 14, 23,31, 29\},\quad C^6_4=\{16, 12, 28, 9,25, 21\},\quad C^6_5=\{32, 24, 19, 18,13, 5\}.$$
\normalsize
It is readily seen that $\vec\Delta B=\{1,2,3,23,21,24\}$ and that we have
$$1\in C^6_0, \quad 2\in C^6_1,\quad 3\in C^6_2,\quad 23\in C^6_3,\quad 21\in C^6_4,\quad 24\in C^6_5$$
so that $\vec\Delta B$ is a complete system of representatives for the cosets
of $C^6$ in $\mathbb{Z}_{37}^*$.
We also see that the four elements 1, 2, 4, 25 of $B$ belong to distinct cosets of $C^6$ in $\mathbb{Z}_{37}^*$.

The above guarantees that $B$ is a $(37,4,1)$ Banff set. A system of representatives for the cosets of $\{1,-1\}$ in $C^6$ 
is $\{1,10,11\}$. Thus, applying Lemma \ref{Banff set}, we
get a Banff $(37,4,1)$ difference family whose blocks are $B$, $10B=\{10,20,3,28\}$, and $11B=\{11, 22, 7, 16\}$.
}

In the case $k=4$ let us check, for instance, that 
$B=\{1,13, 13^2,13^3\}=\{1, 13, 21, 14\}$
is a $(37,4,1)$ Banff set.
The cosets of $C^6$ in $\mathbb{Z}_{37}^*$ are listed below 
\small
$$C^6_0=\{1,10,11,26,27,36\},\quad C^6_1=\{2, 20, 22, 15,17, 35\},\quad C^6_2=\{4, 3, 7, 30,34, 33\},$$
$$C^6_3=\{8, 6, 14, 23,31, 29\},\quad C^6_4=\{16, 12, 28, 9,25, 21\},\quad C^6_5=\{32, 24, 19, 18,13, 5\}.$$
\normalsize
It is readily seen that $\vec\Delta B=\{1,7,8,12,13,20\}$ and that we have
$$1\in C^6_0, \quad 20\in C^6_1,\quad 7\in C^6_2,\quad 8\in C^6_3,\quad 12\in C^6_4,\quad 13\in C^6_5$$
so that $\vec\Delta B$ is a complete system of representatives for the cosets
of $C^6$ in $\mathbb{Z}_{37}^*$.
We also see that the four elements 1, 13, 14, 21 of $B$ belong to distinct cosets of $C^6$ in $\mathbb{Z}_{37}^*$.

The above guarantees that $B$ is a $(37,4,1)$ Banff set. A system of representatives for the cosets of $\{1,-1\}$ in $C^6$ 
is $\{1,10,11\}$. Thus, applying Lemma \ref{Banff set}, we
get a Banff $(37,4,1)$ difference family whose blocks are $B$, $10B=\{10,19,25,29\}$, and $11B=\{11, 32, 9, 6\}$.

\section{Final remarks}

Let us remark that there are values of the parameters $(v,k,\lambda)$ for which designs exist but no Banff design can exist. For instance Theorem \ref{nesting} 
implies that there is no Banff $(v,3,1)$ design with $v\equiv3$ (mod 6). The following result shows that $h({\cal L})>v$ if $r+(v+1)/2 >v$, that is, if $r>(v-1)/2$; for instance, it tells us that the harmonious chromatic number of a $(16,4,2)$-design must be at least 19. 
\begin{theorem}\label{christian}
The Levi graph of a $2$-design with $v$ points and replication number $r$ has harmonious chromatic number at least equal to $r+\frac{v+1}{2}$.
\end{theorem}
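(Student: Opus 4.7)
The plan is to obtain the bound by a tournament-style argument on the point set, exploiting the observation that each pair of colors can label at most one edge of ${\cal L}$. Two preliminary facts from the discussion in Section~2 are at our disposal. First, the $v$ points receive $v$ distinct colors: if $c(p)=c(p')$ for two distinct points, then taking any block $B$ containing both (one exists since $\lambda\ge 1$) makes $\{p,B\}$ and $\{p',B\}$ repeat the color pair $\{c(p),c(B)\}$. Second, for any fixed point $p$ the $r$ blocks through $p$ get $r$ pairwise distinct colors, each different from $c(p)$: otherwise two edges incident with $p$ would share a color pair.

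With this in hand, I would introduce an auxiliary digraph $D$ on the $v$ points: draw an arc $p\to q$ whenever some block $B\ni p$ satisfies $c(B)=c(q)$. The key claim is that the arcs $p\to q$ and $q\to p$ cannot both be present. Indeed, if $B\ni p$ with $c(B)=c(q)$ and $B'\ni q$ with $c(B')=c(p)$, then the edges $\{p,B\}$ and $\{q,B'\}$ of ${\cal L}$ both carry the color pair $\{c(p),c(q)\}$; they are genuinely distinct edges because $c(p)\neq c(q)$ forces $B\neq B'$, and this contradicts harmoniousness. Hence the total number of arcs of $D$ satisfies $\sum_p \deg^+(p)\le\binom{v}{2}$.

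Next I would compute $\deg^+(p)$. Let $x_p$ denote the number of blocks through $p$ whose color is \emph{not} the color of any point. Since the $r$ colors on the blocks through $p$ are distinct, each of the remaining $r-x_p$ blocks through $p$ contributes a unique arc $p\to q$ to a distinct point $q$. Therefore $\deg^+(p)=r-x_p$. Summing over all points,
\[
\sum_{p} (r-x_p)\le \binom{v}{2},\qquad\text{so}\qquad \sum_{p} x_p\ge vr-\tfrac{v(v-1)}{2},
\]
and by averaging there is a point $p^*$ with $x_{p^*}\ge r-(v-1)/2$.

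To conclude, note that the $x_{p^*}$ blocks through $p^*$ with non-point colors all have distinct colors, so ${\cal L}$ uses at least $x_{p^*}$ colors that appear on no point. Adding the $v$ point colors gives $h({\cal L})\ge v+x_{p^*}\ge v+r-(v-1)/2=r+(v+1)/2$. I do not foresee a significant obstacle; the one subtlety is the verification that the two edges in the tournament-argument are distinct, which was handled above by noting $c(p)\neq c(q)$.
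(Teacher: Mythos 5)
Your proof is correct, and it takes a genuinely different route from the paper's, even though both ultimately rest on the same numerical fact: the number of incidences $(p,B)$ in which the block $B$ receives a point's colour is at most $\binom{v}{2}$. In the paper this is proved by a ``forbidden set'' argument --- the blocks coloured $i$ are pairwise disjoint and their union must avoid the point $i$ together with the colours of all blocks through $i$, which after a flag count gives $b^*\le \frac{v(v-1)}{2k}$, i.e.\ $\sum_i r_i\le\binom{v}{2}$ --- whereas you obtain it by orienting an auxiliary graph on the points and showing it has no loops and no opposite arcs, which is arguably more transparent. The two arguments then diverge more substantially in how the bound on $h$ is extracted: the paper works globally, combining $b=vr/k$ with the bound $|{\cal B}_i|\le\lfloor v/k\rfloor$ on the colour classes of blocks, which requires some bookkeeping with $s/\lfloor s\rfloor$ and in exchange yields the marginally stronger intermediate inequality $h\ge\bigl(r-\frac{v-1}{2}\bigr)\frac{v/k}{\lfloor v/k\rfloor}+v$; you instead average over the points to locate a single point $p^*$ lying on at least $r-\frac{v-1}{2}$ blocks whose colours are not point-colours, and since blocks through a common point must receive distinct colours, these immediately force that many colours beyond the $v$ point-colours. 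Your finish never uses the block size $k$, the identity $b=vr/k$, or the disjointness of equally coloured blocks, and it avoids the floor-function case analysis entirely; the one delicate step --- that the two edges witnessing a digon are genuinely distinct --- you handle correctly (indeed $p\ne q$ already suffices there).
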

\begin{proof}
Let ${\cal D}=(V,{\cal B})$ be $(v, k,\lambda)$-design with point set $V= \{1,\dots,v\}$ and let $\cal L$ be the Levi graph of $\cal D$.
Assume that $\varsigma$ is a harmonious $h$-coloring of $\cal L$ with color set $C=\{1,\dots,h\}$.
As already said, we necessarily have $h\geq v$ since the colors of any pair of distinct points have to be distinct. 
Thus the set of points $V$ is a subset of the set $C$ of colors.
Without loss of generality we may assume that $\varsigma(i)=i$ for $1\leq i\leq v$.
For $1\leq i\leq h$,  let ${\cal B}_i$ be the set of blocks colored $i$.  Obviously, the blocks of ${\cal B}_i$ are pairwise disjoint so that
we have \begin{equation}\label{disjoint}|{\cal B}_i|\leq \biggl{\lfloor}{v\over k}\biggl{\rfloor}\quad \forall i\in C.\end{equation}
It is also clear that $\sum_{i=1}^h |{\cal B}_i| =b$. 
Let ${\cal B}^*:=\bigcup_{i=1}^{v}{\cal B}_i$ be the set of all blocks whose colors belong to $V$ and set $|{\cal B}^*|=b^*$.
For $i=1,\dots,v$, let $r_i$ be the number of blocks of ${\cal B}^*$ containing the point $i$.
Consider the set $S$ of all flags $(i,B)$ of the design $\cal D$ having $B$ in ${\cal B}^*$. For each $\overline{B}\in {\cal B}*$, the number of pairs $(\overline{i},B)$ belonging to 
$S$ with $\overline i$ fixed is $r_{\overline{i}}$ so that we have $|S|=\sum_{i=1}^vr_i$. The number of pairs $(i,\overline{B})$ belonging to 
$S$ with $\overline B$ fixed is clearly equal to $k$ so that we have $|S|=k\cdot b^*$.
Comparing the obtained equalities we get 
\begin{equation}\label{b*} \sum_{i=1}^v{r_i\over k}=b^*.\end{equation}
Now, given $i\in V$, let $C_i$ be the set of colors of the $r_i$ blocks of ${\cal B}^*$ passing through the point $i$.
A block $B$ of ${\cal B}_i$ has empty intersection with the set $C_i \ \cup \ \{i\}$. Indeed if $c\in C_i \ \cap \ B$,
then there is a flag $(i,B')$ of $\cal D$ with $B'\in{\cal B}_c$ since $c\in C_i$ and, at the same time, $(c,B)$ is a flag of $\cal D$ since 
$c\in B$. It follows that the two flags $(i,B')$ and $(c,B)$ receive the same pair of colors $\{i,c\}$ against the definition of harmonious coloring.
Also, we cannot have $i\in B$ otherwise $i$ and $B$ would be adjacent vertices of $\cal L$ with the same color.
Thus the union of the blocks of ${\cal B}_i$, which is a set of size $k\cdot|{\cal B}_i|$, is contained in $V\setminus(C_i \ \cup \ \{i\})$ which has size $v-1-r_i$.
It follows that $|{\cal B}_i|\leq {v-1-r_i\over k}$ and then, taking into account of (\ref{b*}), $$b^*=\sum_{i=1}^v|{\cal B}_i|\leq \sum_{i=1}^v {v-1-r_i\over k}=\sum_{i=1}^v {v-1\over k}-\sum_{i=1}^v {r_i\over k}
={v(v-1)\over k}-b^*.$$
We deduce that $b^*\leq{v(v-1)\over 2k}$. At this point, also taking into account (\ref{disjoint}), we can write
$${vr\over k}=b=\sum_{i=1}^h|{\cal B}_i|=\sum_{i=1}^v|{\cal B}_i|+\sum_{i=v+1}^h|{\cal B}_i|\leq {v(v-1)\over 2k}+(h-v)\left\lfloor{v\over k}\right\rfloor.$$
Setting ${v\over k}=s$, the above inequality can be rewritten as
$$rs\leq {v-1\over 2}s+(h-v)\lfloor s\rfloor.$$
Solving this with respect to $h$ yields $h\geq (r-{v-1\over 2}){s\over \lfloor s\rfloor}+v$. Obviously, we have  ${s\over \lfloor s\rfloor}\ge1$
and then $h\geq (r-{v-1\over 2})+v=r+{v+1\over2}$.
\end{proof}

Note that the harmonious chromatic number of the Levi graph may be strictly greater than the bounds we presented here. 
For instance, consider the  Levi graph $\cal L$ of the point-plane design $\cal D$ associated with PG$(3,2)$, that is a $(15,7,3)$-design: 
both Theorem \ref{christian}, and the fact that the design has 15 points, imply that $h({\cal L})\ge 15$.
Yet, 
 we have established that $h({\cal L})=20$ by means of a computer program. 
The design $\cal D$ can be viewed as the pair $(\mathbb{Z}_{15},dev B)$ where
$B=\{0,1,2,4,5,8,10\}$. A harmonious chromatic 20-coloring $\varsigma$ of $\cal L$ with set of colors $C=\mathbb{Z}_{15} \ \cup \ \{c_1,c_2,c_3,c_4,c_5\}$ 
can be obtained by taking
$\varsigma(i)=i$ for every $i\in \mathbb{Z}_{15}$ and $\varsigma(B+i)=\sigma_i$ where the $\sigma_i$s are defined as follows: 
$$(\sigma_0,\sigma_1,\dots,\sigma_{14})=(3, 7, 13, 14, 1, 8, c_1, 4, 11, c_2, 6, c_3, 0, c_4, c_5).$$

\section*{Acknowlegements}
We would like to thank Banff International Research Station for Mathematical Innovation and Discovery and the organizers of the BIRS workshop 
``Extremal Graphs arising from Designs and Configurations''.

We also thank Doug Stinson for interesting discussions exploring the connection between harmonious colorings and nestings.

Finally, we thank the referees for their  
careful reading of our paper, and for their comments, corrections and suggestions that improved this work considerably.

\end{document}